\theoremstyle{theorem}
\newtheorem{theorem}{Theorem}
\newtheorem{corollary}[theorem]{Corollary}
\newtheorem{proposition}[theorem]{Proposition}
\theoremstyle{definition}
\newtheorem*{definition}{Definition}
\DeclareMathSymbol{\shortminus}{\mathbin}{AMSa}{"39}
\def\w{\omega}
\def\SS1{\mathbb{S}^1}
\def\Z{\mathbb{Z}} 
\def\R{\mathbb{R}}
\def\d{\delta}
\def\a{{\bf a}}
\def\w{{\bf w}}
\def\d{{\bf d}}
\def\a{{\bf a}}
\def\h{{\bf h}}
\def\C{\mathcal{C}}
\begin{document}

\title{The horizontal chord set  }
\markright{Horizontal chords}
%\author{Author names here}

\author{Diana Davis}

\author{Serge Troubetzkoy}
\maketitle 

\begin{abstract}
    We study the set of lengths of the horizontal chords of a continuous function. We give a new proof of Hopf's characterization of this set, and show that it implies that
     no matter which function we choose, at least half of the possible lengths occur.  
     We prove several results about functions for which \emph{all} the possible lengths  occur.
\end{abstract}

%\address[D.\ Davis]{Phillips Exeter Academy, Exeter, NH, USA.
%postal address: 20 Main Street, Exeter NH 03833}
%\email{ddavis@exeter.edu}

%\address[S.\ Troubetzkoy]{Aix Marseille Univ, CNRS, Centrale Marseille, I2M, Marseille, France.
%postal address: I2M, Luminy, Case 907, F-13288 Marseille Cedex 9, France}
%\email{serge.troubetzkoy@univ-amu.fr}

%\date{\today}

\section{Story}
The mathematical conference center CIRM, on the outskirts of Marseille in France, was created in 1981 by the French mathematics community and hosts over 4500 mathematicians each year. CIRM is situated on an estate with a thousand-year history, at the edge of the Calanques National Park.

On a beautiful summer day, two mathematicians walk from the CIRM research institute down, down, down to the Mediterranean Sea at the beautiful Calanque de Sugiton. After briefly appreciating the cliffs, the waves, and the people jumping off of the former into the latter, they then walk back up, up, up via the same path, the whole trip taking one hour. They wonder: is there a spot on the route that they passed by two times, exactly 23 minutes apart? How about $\ell$ hours apart, for any $\ell\in[0,1]$?

In this paper, we show that the answer to the above question is ``yes.'' Such a spot exists for every $\ell$ even if the mathematicians wander back and forth on the trail along the way (left and middle pictures in Figure \ref{fig:hikes}). On the other hand, if the mathematicians pass by CIRM on their return and walk in the other direction, and then go back to  part of the way to the Calanque before returning to CIRM (right picture in Figure \ref{fig:hikes}), there is no such guarantee; however, at least half of the possible times must be achieved. The main tool in the proof of this last result is Hopf's characterization of the possible set of lengths $\ell$ which can occur, and we also give a new proof of Hopf's result.

\begin{figure}[ht]
    \centering
\includegraphics[width=0.8\textwidth]{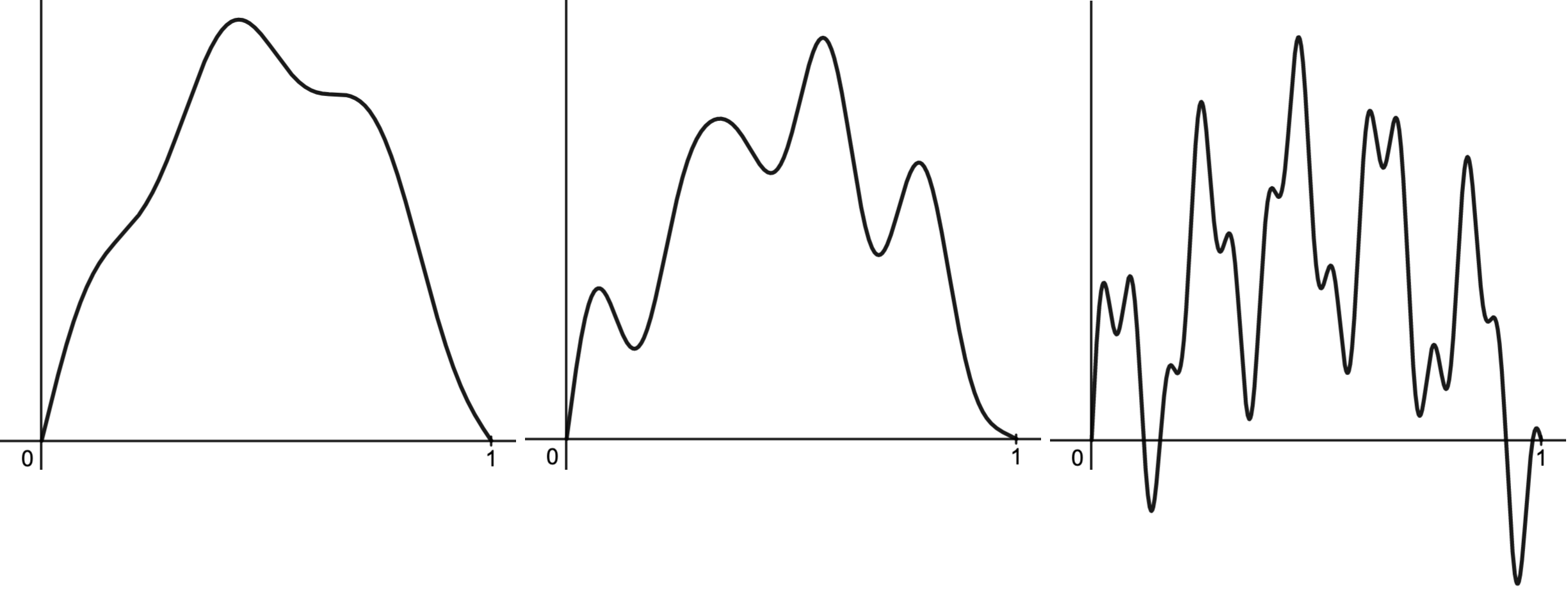}
    \caption{A simple hike, a meandering hike, and a wandering hike. Time is on the horizontal axis, and distance from CIRM towards the Calanque is on the vertical axis.}
    \label{fig:hikes}
\end{figure}

A different application of the issues discussed in this article, to average pace over time, was given in \cite{BDD}.

\section{Definitions}\label{sec:definitions}

By shifting and scaling, we may assume that length (in time) of the hike is $1$, and that the common value of $f(0)$ and $f(1)$ is $0$.
Let 
$$\C := \{ f: [0,1] \to \R, f  \text{  is continuous such that } f(0) = f(1)=0 \}.$$ 
The {\it horizontal chord set} is the set of
lengths horizontally connecting two points on the graph, i.e.,
$$S(f) := \{\ell \in [0,1]: \text{ there exists } 
s \in [0,1] \text{ with } f(s) = f(s + \ell)\}.$$
Hopf has characterized which sets $S \subset \R$ can be a horizontal chord set for some $f \in \C$ (\cite{H}, see \cite{BDD} for an English version of Hopf's proof).
We study the size and structure of $S(f)$. We begin with the following question: for which
functions $f \in \C$ does $S(f) = [0,1]$? 
We call this the {\it full chord property.}

\begin{definition}\label{def:mountain}
Suppose $f$ is a continuous function, $s_1 < s_2$, $a > 0$, $f(s_1)=f(s_2) = 0$ and $f((s_1,s_2)) = (0,a]$. Then we call $f|_{[s_1,s_2]}$ a {\it mountain} with {\it endpoints} $s_1$ and $s_2$. Fix $t\in[s_1, s_2]$ so that $f(t)=a$. (Note that while there may be several choices for $t$, none of our results depend on the choice.) Then we call $f|_{[s_1,t]}$ the {\it ascent}, $f|_{[t,s_2]}$ the {\it descent}, $a$ the {\it height}, and $|s_2-s_1|$ the {\it width}.

A  {\it valley} is exactly analogous to the above, with $a<0$ and ``ascent'' and ``descent'' exchanged.

A {\it mountain range} is a union of contiguous mountains, and possibly also intervals with $f(x)=0$ (including intervals degenerate to a point), i.e., essentially the same definition as above but with $f([s_1,s_2]) = [0,a]$. The {\it height} of a mountain range is the height of its tallest mountain, which exists because a continuous function has a maximum on a compact set. The ascent, descent, and width are as above.
We define a {\it valley range} analogously.

Note that mountain ranges and valley ranges can include horizontal intervals satisfying $f(x)=0$. Such a horizontal piece between a mountain range and a valley range can be considered as part of either one; the choice does not affect our results.

When we refer to a  mountain or a valley range we will always implicitly assume it is a  {\it maximal} mountain or valley range, modulo the choice of assignment of horizontal pieces at $f(x)=0$ as described above.
\end{definition}

\section{There and back again}

\begin{theorem}\label{mountainsarefull}
If $f$ is continuous and $f|_{[s_1,s_2]}$ is a mountain range, then $[0,s_2 - s_1] \subset S(f)$.
\end{theorem}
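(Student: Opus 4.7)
The plan is a direct intermediate value theorem argument applied to the shift-difference function. Fix $\ell \in [0, s_2-s_1]$; I want to exhibit $s$ with $f(s) = f(s+\ell)$ and both points in the domain. The natural place to look is $s \in [s_1, s_2-\ell]$, which is a nonempty interval by the hypothesis on $\ell$, and for such $s$ both $s$ and $s+\ell$ lie in $[s_1,s_2]$.

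Define $g : [s_1, s_2 - \ell] \to \R$ by $g(s) := f(s+\ell) - f(s)$. I would evaluate at the endpoints. Since $f|_{[s_1,s_2]}$ is a mountain range, $f \geq 0$ throughout $[s_1,s_2]$ and $f(s_1) = f(s_2) = 0$. Therefore
\[
g(s_1) \;=\; f(s_1+\ell) - f(s_1) \;=\; f(s_1+\ell) \;\geq\; 0,
\]
\[
g(s_2 - \ell) \;=\; f(s_2) - f(s_2 - \ell) \;=\; -f(s_2 - \ell) \;\leq\; 0.
\]
Since $g$ is continuous, the intermediate value theorem yields some $s \in [s_1, s_2-\ell]$ with $g(s) = 0$, i.e.\ $f(s) = f(s+\ell)$. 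This shows $\ell \in S(f)$.

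The edge cases are immediate: $\ell = 0$ is trivially in $S(f)$ (any $s$ works), and $\ell = s_2 - s_1$ is witnessed by $s = s_1$. I would also briefly note that the horizontal stretches at height $0$ permitted in the definition of a mountain range cause no trouble, since the argument only uses $f \geq 0$ on $[s_1,s_2]$ together with $f(s_1) = f(s_2) = 0$. The only obstacle worth flagging is really bookkeeping — making sure $[s_1, s_2-\ell]$ is nonempty and contained in $[0,1]$ — and both follow from $0 \leq s_1 \leq s_2 \leq 1$ and $\ell \leq s_2 - s_1$. A symmetric proof (flipping signs) handles valley ranges if needed later.
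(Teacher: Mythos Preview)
Your proof is correct and is essentially the same argument as the paper's: both apply the Intermediate Value Theorem using that $f(s_1)=f(s_2)=0$ and $f\ge 0$ on $[s_1,s_2]$, so the shift-difference changes sign between the two endpoints. The paper phrases this geometrically (the shifted mountain range must intersect the original because one endpoint of each lies under the other graph), whereas you write out the difference function $g(s)=f(s+\ell)-f(s)$ explicitly, but the content is identical.
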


\begin{corollary}
If $f(x) \ge 0$ for all $x \in [0,1]$ 
then $f$ has the full chord property.
\end{corollary}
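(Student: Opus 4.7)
The plan is to reduce the statement to a direct application of the Intermediate Value Theorem, exploiting the fact that a mountain range is nonnegative with value zero at its endpoints.

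Fix $\ell \in [0, s_2 - s_1]$; the case $\ell = 0$ is trivial, so assume $\ell > 0$. I would define the auxiliary function
\[
g(s) := f(s+\ell) - f(s), \qquad s \in [s_1,\, s_2 - \ell],
\]
which is continuous and well-defined because $\ell \leq s_2 - s_1$. By the definition of a mountain range, $f(s_1) = f(s_2) = 0$ and $f(x) \geq 0$ for all $x \in [s_1, s_2]$. Evaluating $g$ at the endpoints of its domain:
\[
g(s_1) = f(s_1+\ell) - f(s_1) = f(s_1+\ell) \geq 0,
\]
\[
g(s_2-\ell) = f(s_2) - f(s_2-\ell) = -f(s_2-\ell) \leq 0.
\]
The Intermediate Value Theorem then yields some $s \in [s_1, s_2-\ell] \subset [0,1]$ with $g(s) = 0$, i.e.\ $f(s) = f(s+\ell)$, so $\ell \in S(f)$.

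There is essentially no obstacle here: the sign inequalities are immediate consequences of the nonnegativity of $f$ on the mountain range and the vanishing at its endpoints, and the domain $[s_1, s_2 - \ell]$ is nonempty precisely because $\ell \leq s_2 - s_1$. The only thing to double-check is that the definition of a mountain range really does enforce both $f \geq 0$ throughout and $f(s_1) = f(s_2) = 0$, but this follows from the requirement $f([s_1,s_2]) = [0,a]$ combined with the endpoint condition inherited from the constituent mountains (and from the allowance of flat zero pieces at the boundary). The corollary is then immediate, since if $f \geq 0$ on all of $[0,1]$ then $f|_{[0,1]}$ is itself a mountain range (or identically zero, in which case $S(f) = [0,1]$ trivially), so $[0,1] \subset S(f)$.
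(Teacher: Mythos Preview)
Your proof is correct and follows essentially the same approach as the paper: both reduce to the Intermediate Value Theorem via the observation that the shift of a nonnegative function vanishing at its endpoints must cross the original. The paper phrases this geometrically (the shifted mountain range has an endpoint inside the original, forcing an intersection), while you make the same argument explicit by computing the signs of $g(s)=f(s+\ell)-f(s)$ at the endpoints of $[s_1,s_2-\ell]$.
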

In particular the walk to Sugiton  has the full chord property.
Both of these results also hold for valley ranges.

\begin{proof}
Fix a mountain range $f|_{[s_1,s_2]}$, and consider the shifted mountain range \mbox{$f(x-\ell)$} (see Figure \ref{fig:shift}). For any shift $\ell\in[0,s_2-s_1]$,
exactly one endpoint of the mountain range is inside the other copy. 
Thus it follows from the Intermediate Value Theorem that the two mountain ranges intersect, so $f(x)=f(x-\ell)$ and there is a horizontal chord of \nopagebreak length $\ell$.
\end{proof}

A closely related problem is the following:
two mountain climbers begin at sea level at opposite ends of a mountain range, can they find routes along which to travel, always maintaining equal altitudes, until they eventually meet? A positive solution
implies the full chord property.
It holds for piecewise monotone mountain ranges \cite{W,GPY} and  for mountain ranges without plateaus \cite{K}, however this stronger property does not hold for certain mountains with plateaus \cite{K}.

\begin{figure}[ht]
    \centering
    \includegraphics[width=0.3\textwidth]{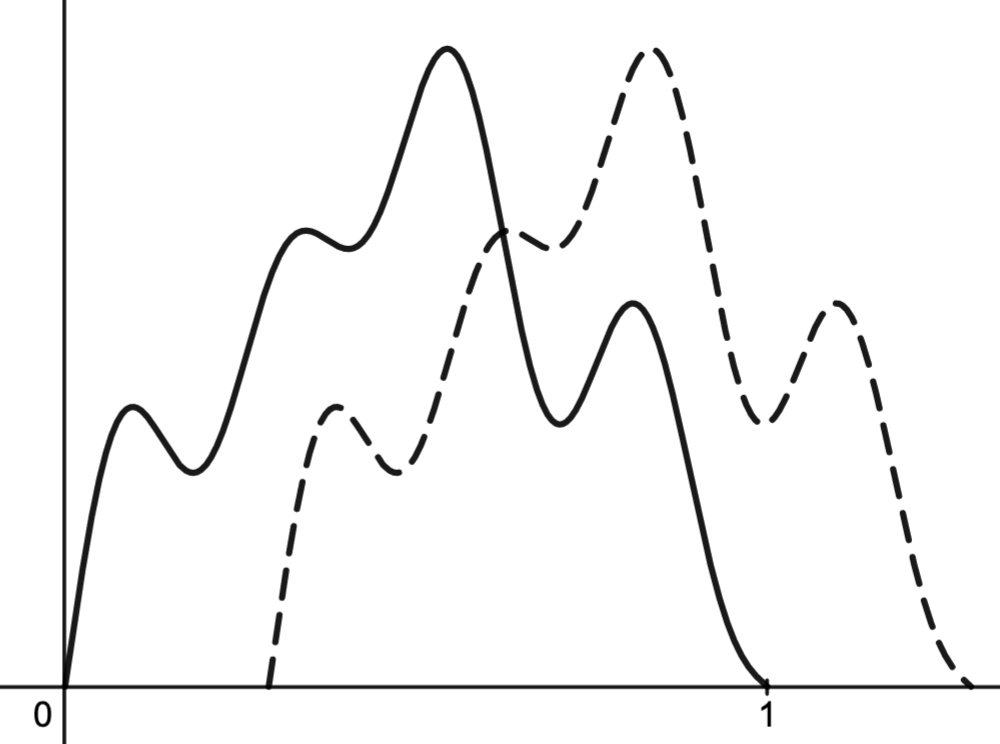}
    \caption{A mountain range and its shift always intersect.}
    \label{fig:shift}
\end{figure}

\section{Structure of the chord set}
Next we show that the chord set might not be the entire interval $[0,1]$:
\begin{theorem}\label{thm:1}
If $f$ has a mountain at one endpoint and a valley at the other endpoint, then $f$ does {not} have the full chord property.
\end{theorem}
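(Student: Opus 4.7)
The plan is to exhibit an explicit $\ell \in [0,1]$ that is not in $S(f)$, by choosing $\ell$ very close to $1$ so that any potential chord would have to straddle the entire function: its left endpoint forced into the mountain region near $0$ and its right endpoint forced into the valley region near $1$. Since $f$ is strictly positive on the mountain's interior and strictly negative on the valley's interior, these two values can never coincide.

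More concretely, by hypothesis there exist $s_0 > 0$ and $t_0 < 1$ such that $f|_{[0,s_0]}$ is a mountain (so $f > 0$ on $(0,s_0)$ and $f(s_0)=0$) and $f|_{[t_0,1]}$ is a valley (so $f < 0$ on $(t_0,1)$ and $f(t_0)=0$). I would then fix any $\varepsilon$ with $0 < \varepsilon < \min(s_0,\, 1-t_0)$ and set $\ell := 1 - \varepsilon$. Any $s \in [0,1-\ell] = [0,\varepsilon]$ satisfies $s \in [0,s_0]$, so $f(s) \ge 0$, with $f(s) > 0$ on $(0,\varepsilon]$. Simultaneously $s+\ell \in [1-\varepsilon, 1] \subset [t_0,1]$, so $f(s+\ell) \le 0$, with strict inequality on $[1-\varepsilon, 1)$, i.e. for $s \in [0,\varepsilon)$.

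The last step is to verify that $f(s) = f(s+\ell)$ has no solution on $[0,\varepsilon]$. For $s \in (0,\varepsilon)$ both inequalities are strict, with opposite signs, so no equality. The two boundary cases $s=0$ (giving $f(0)=0$ but $f(1-\varepsilon) < 0$) and $s=\varepsilon$ (giving $f(1)=0$ but $f(\varepsilon) > 0$) are then handled by the strict positivity/negativity in the open intervals. Hence $\ell \notin S(f)$, and $f$ does not have the full chord property.

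The argument is essentially a sign-based pigeonhole, so there is no real technical obstacle; the only point requiring care is the boundary behaviour at $s = 0$ and $s = \varepsilon$, where $f$ vanishes on one side of the comparison but is strictly of the correct sign on the other. Choosing $\varepsilon$ strictly less than both $s_0$ and $1-t_0$ guarantees that at least one of $f(s)$, $f(s+\ell)$ is nonzero at each boundary point, which suffices.
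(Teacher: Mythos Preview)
Your proof is correct and follows essentially the same approach as the paper's: choose $\ell$ close enough to $1$ that any chord of length $\ell$ is forced to have one endpoint in the mountain and the other in the valley, giving opposite signs for $f$. You are in fact more careful than the paper about the boundary points where $f$ vanishes, though you have tacitly assumed the mountain lies at the left endpoint and the valley at the right---the other configuration follows by symmetry (or by replacing $f$ with $-f$).
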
 
\begin{proof}
Let $w_1, w_2$ be the width of the mountain and the valley, respectively. Then any chord with horizontal displacement in the range $[1-\min\{w_1, w_2\}, 1)$ must have one endpoint in the mountain and one endpoint in the valley. Thus the function values at these two endpoints have different signs, so the chord between them is not horizontal. 
\end{proof}
Most of this paper consists of exploring closed intervals of chord lengths. However, the following figure shows that $S(f)$ does not necessarily consist of unions of closed intervals; there is a possibility that it can include isolated points.

By making lots of bumps, we can make the set of such additional chord lengths into a set with accumulation points.
\vspace{-0.3cm}
\begin{figure}[ht]
    \centering
    \includegraphics[height=0.3
    \textwidth]{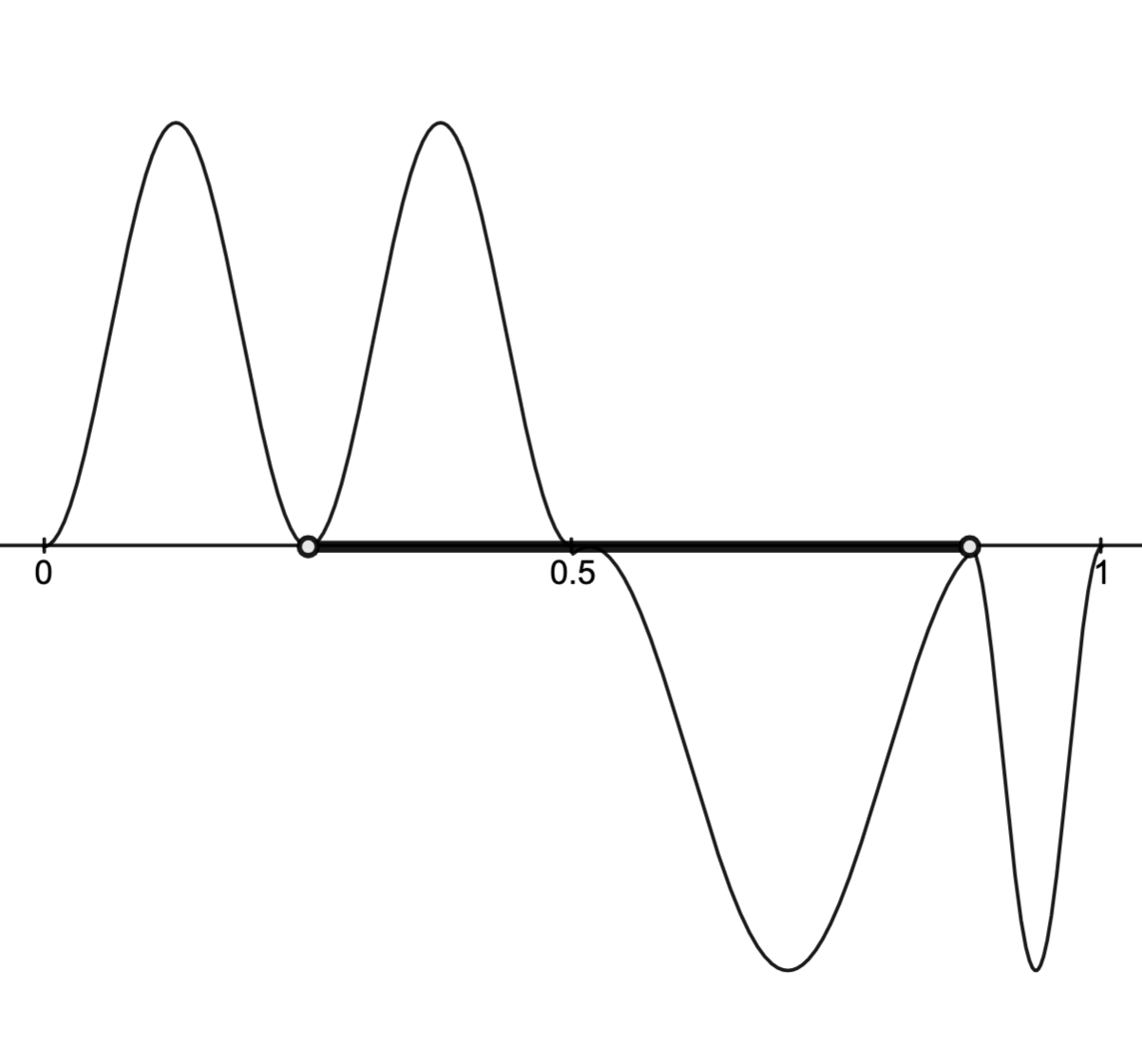} 
    \hspace{0.25in}
    \includegraphics[height=0.3\textwidth]{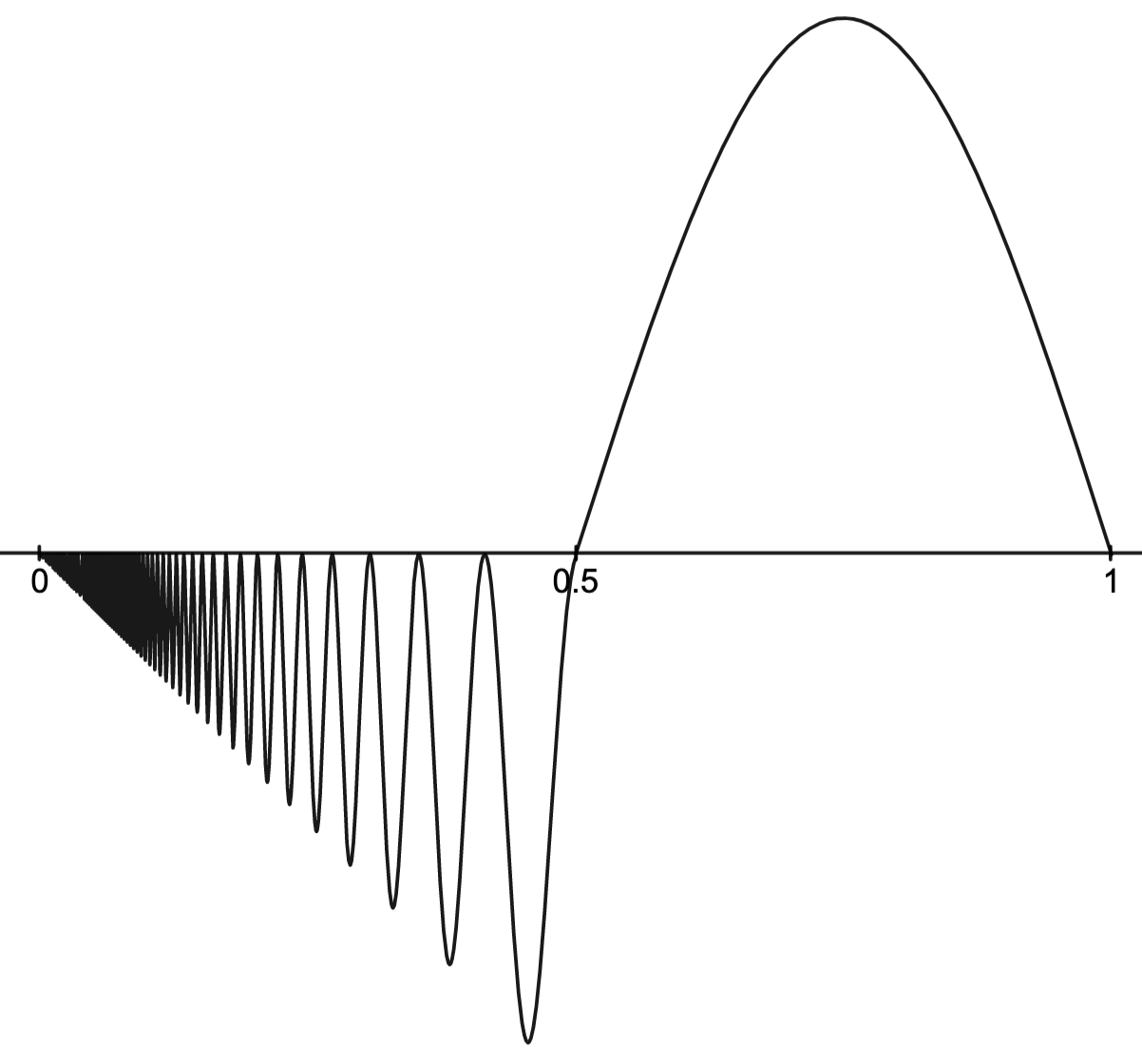}
    \vspace{0.15in}
    \caption{(left) A function with $S(f) = [0,1/2] \cup \{5/8, 3/4, 7/8,1\}$; the chord with length $5/8$ is marked in bold, and cannot be slid up nor down; (right) a function where $S(f)$ has an accumulation point at 1. }
    \label{fig:isolated}
\end{figure}

\section{Hopf's theorem}

We begin this section by the following simple result about continuous periodic functions.

\begin{proposition}\label{prop:prop}
Let $F:\mathbb{R} \to\mathbb{R}$ be a continuous periodic function with period $T$. Then for each $a \in \R$ the graph of $F(x - a)$ intersects the graph of $F(x)$ at least once in each interval of length $T$.
\end{proposition}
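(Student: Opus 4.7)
The plan is to study the continuous auxiliary function
\[
g(x) \;:=\; F(x) - F(x-a),
\]
which is itself periodic with period $T$, and to show that $g$ has a zero in every interval of length $T$. A zero of $g$ in $[x_0,x_0+T]$ is precisely a point where the graphs of $F(x)$ and $F(x-a)$ meet, so this will prove the proposition.

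To produce such a zero, I would exploit the periodicity of $F$ to locate a point where $g\ge 0$ and a point where $g\le 0$, both inside $[x_0,x_0+T]$, and then invoke the Intermediate Value Theorem. More precisely: because $F$ is continuous and periodic, it attains its global maximum $M$ and global minimum $m$ on any interval of length $T$. Choose $x_{\max},x_{\min}\in[x_0,x_0+T]$ with $F(x_{\max})=M$ and $F(x_{\min})=m$. Then
\[
g(x_{\max}) \;=\; F(x_{\max}) - F(x_{\max}-a) \;=\; M - F(x_{\max}-a) \;\ge\; 0,
\]
and symmetrically $g(x_{\min}) = m - F(x_{\min}-a) \le 0$. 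Since $g$ is continuous on the closed subinterval of $[x_0,x_0+T]$ whose endpoints are $x_{\max}$ and $x_{\min}$, the Intermediate Value Theorem supplies a point $x^{*}$ in that subinterval with $g(x^{*})=0$, i.e.\ $F(x^{*})=F(x^{*}-a)$.

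I expect no real obstacle; the only point requiring a little care is making sure the extremal points lie in the prescribed interval $[x_0,x_0+T]$, which is exactly what periodicity guarantees. An alternative route, in case a reader prefers it, is to note that $\int_{x_0}^{x_0+T} g(x)\,dx = 0$ by a change of variables and periodicity, so $g$ is either identically zero on that interval or must change sign there, again yielding a zero by continuity. Either version is short and elementary, which fits the role of this proposition as a preliminary step toward the new proof of Hopf's theorem.
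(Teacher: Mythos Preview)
Your main argument is correct and is essentially the paper's own proof: both pick points where $F$ attains its global maximum and minimum and use them to force the auxiliary function $g(x)=F(x)-F(x-a)$ to take both signs, with the Intermediate Value Theorem finishing the job; the paper merely packages this as a proof by contradiction, while you argue directly and localize the extrema in the given interval $[x_0,x_0+T]$ from the outset, which handles the ``every interval of length $T$'' clause a bit more cleanly. Your alternative integral argument (that $\int_{x_0}^{x_0+T} g = 0$ forces either $g\equiv 0$ or a sign change) is a genuinely different and equally valid route that the paper does not use.
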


\begin{proof}
If $F$ is constant then there is nothing to prove, so suppose that $F$ is non-constant.
 Fix $a \in \R$,  
if $a = 0$ there is also nothing to prove, so assume $a \ne 0$.  

Since $[0,T]$ is compact $F|_{[0,T]}$ attains a miminum and a maximum, and since $F$ is periodic these are global extrema as well. Let $(x_m,y_m)$ be a minimum of $F$ and $(x_M,y_M)$ a maximum of $F$. 
If the result is not true then either  $F(x - a) > F(x)$ for all $x \in \R$ or the opposite inequality holds.  In the first case we get a contradiction by choosing $x = x_M$  and in the second case by choosing $x = x_m$.

If $F(x - a) = F(x)$ then $F(x -a + kT) = F(x)$ for all $k \in \Z$ by periodicity. 
\end{proof}

This proposition immediately yields a new (and simpler) proof of one direction of Hopf's classical theorem.
\begin{theorem} Let $f:[0,1]\to\R$ be a continuous function with $f(0)=f(1)=0$, and let $S(f)$ denote its horizontal chord length set.
Then the set $S^c  := \R^+ \setminus S(f)$
is open and additive; i.e., if $a,b \in S^c$ then \mbox{$a + b \in S^c$}.
\end{theorem}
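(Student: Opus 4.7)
The plan is to prove the two assertions separately. For openness, I will show $S(f)$ is closed in $\R^+$. Given $\ell_n \in S(f)$ with $\ell_n \to \ell > 0$, choose $s_n \in [0, 1-\ell_n]$ with $f(s_n) = f(s_n + \ell_n)$, pass to a convergent subsequence $s_n \to s \in [0, 1-\ell]$, and apply continuity to conclude $f(s) = f(s+\ell)$, so $\ell \in S(f)$. Since $S(f) \subseteq [0,1]$, this shows $S(f)$ is closed in $\R^+$, hence $S^c$ is open.

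For additivity I will argue the contrapositive: if $a+b \in S(f)$ then $a \in S(f)$ or $b \in S(f)$. The case $a+b > 1$ is vacuous since $S(f) \subseteq [0,1]$, so I assume $a+b \le 1$ and pick $s_0 \in [0, 1-a-b]$ with $f(s_0) = f(s_0+a+b)$. The key idea is to bring Proposition \ref{prop:prop} into play with a shorter period: extend the restriction $f|_{[s_0, s_0 + a + b]}$ periodically with period $a+b$ to obtain a continuous function $G: \R \to \R$, which is well-defined and continuous precisely because $f(s_0) = f(s_0 + a+b)$. Applying Proposition \ref{prop:prop} to $G$ with shift $a$ yields some $x_0 \in [s_0, s_0+a+b)$ with $G(x_0+a) = G(x_0)$, and I then split on the location of $x_0$.

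If $x_0 \in [s_0, s_0+b]$, both $x_0$ and $x_0+a$ lie in $[s_0, s_0+a+b]\subseteq [0,1]$, where $G$ agrees with $f$, so $f(x_0) = f(x_0 + a)$ and $a \in S(f)$. Otherwise $x_0 \in (s_0+b, s_0+a+b)$ and $x_0 + a > s_0 + a + b$; periodicity of $G$ then gives $G(x_0 + a) = G(x_0 + a - (a+b)) = G(x_0 - b)$ with $x_0 - b \in (s_0, s_0+a) \subseteq [s_0, s_0+a+b]$, producing $f(x_0) = f(x_0-b)$ and hence $b \in S(f)$. Either way, one of $a$ or $b$ lies in $S(f)$, completing the argument.

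The main obstacle I anticipate is bookkeeping the intervals correctly: confirming that $G$ is genuinely continuous at the seams of the periodic extension (which follows from the matching boundary values $f(s_0) = f(s_0+a+b)$) and verifying that in each case the chord endpoints really lie inside $[0,1]$, so that the chord produced is a chord of $f$ rather than an artifact of the periodic extension. Both checks reduce to the fact that $[s_0, s_0 + a + b] \subseteq [0,1]$.
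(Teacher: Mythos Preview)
Your proof is correct and follows essentially the same approach as the paper: a sequential-compactness argument for closedness of $S(f)$, and for additivity the contrapositive via the periodic extension of $f|_{[s_0,s_0+a+b]}$ combined with Proposition~\ref{prop:prop}, followed by the same two-case split on whether the shifted point stays inside the fundamental domain. Your bookkeeping is in fact slightly tidier than the paper's (you handle the vacuous case $a+b>1$ explicitly and keep the $s\mapsto s+\ell$ convention consistent throughout).
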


\begin{proof}
Suppose $\ell_i \in S(f)$ and $\ell_i \to \ell$.  Then there are points $x_i \in [0,1]$ such that $x_i - \ell_i \in [0,1]$ and
 $f(x_i) = f(x_i -\ell_i)$.  By passing to a subsequence
we can assume that  $x_i$ converges to a point $x$ and thus $x_i -\ell_i \to x - \ell$ and by continuity of $f$ we have $f(x) = f(x -\ell)$.  Thus
$S(f)$ is closed and so $S^c$ is open.

For the proof we also consider the signed chord length set $$\bar{S}(f) :=\{ \ell \in \R: \text{ there exists } s \in [0,1] 
 \text{ such that } f(s -\ell) = f(s) \}.$$
 
Notice that
  $\bar{S}(f) = S(f) \cup - S(f)$.
Suppose $a + b \in S(f)$, so we have \mbox{$x_1 \in [0,1]$} and $x_2 = x_1 - (a + b) \in [0,1]$ with $f(x_1) = f(x_2)$.  
Let $F: \R \to \R$ be such that
$F|_{[x_1,x_2]} = f|_{[x_1,x_2]}$ and $F$ is periodic with period $a+b$.
Apply Proposition \ref{prop:prop} to $F$  to find a point $x_0$ such that
$F(x_0 - a) = F(x_0)$. 

By the remark we can choose $x_0 \in [x_1,x_2]$. 
If $x_0 - a \in [x_1,x_2]$ then we conclude $f(x_0 - a) = f(x_0)$, i.e., $a \in S(f)$.
On the other hand if $x_0 - a < x_1$ then $x_0 - a + (a+b) = x_0 + b \in [x_1,x_2]$
and so we conclude $f(x_0 +b) = f(x_0)$ and so \mbox{$-b \in \bar{S}(f)$},
and thus $b \in S(f)$.
\end{proof}

\section{At least half the lengths}

The main ingredients in this section  are Hopf's theorem and  the central symmetry $\ell \to d - \ell$ of the interval $[0,d]$ .
Let $S_d(f) := S(f) \cap [0,d]$.
The image of $S_d(f)$ by this symmetry is the set $T_d(f) := \{\ell: d-\ell \in S_d(f)\}$.
Denote the Lebesgue measure on $[0,1]$ by $\lambda$.
We have the following result:

\begin{theorem} Let $f:[0,1]\to\mathbf{R}$ be a continuous function with $f(0)=f(1)=0$. Then the length  of $S_d(f)$ satisfies 
$\lambda(S_d(f)) 
\ge \frac{1}{2}d$ for each $d \in (0,1]$. This inequality is strict if $d \not \in S(f)$. For $d=1$ this yields $$\lambda(S(f)) \ge \frac{1}{2}.$$
This last bound is sharp.
\end{theorem}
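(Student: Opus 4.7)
The plan is to prove three things separately: (i) the weak inequality $\lambda(S_d(f)) \ge d/2$ when $d \in S(f)$, (ii) the strict inequality when $d \notin S(f)$, and (iii) sharpness at $d = 1$.

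For (i), I would combine Hopf's additivity with the reflection symmetry in a single stroke. If some $\ell \in [0,d]$ satisfied both $\ell \in S^c$ and $d - \ell \in S^c$, then by additivity $d = \ell + (d - \ell) \in S^c$, contradicting $d \in S(f)$. Hence for every $\ell \in [0,d]$, at least one of $\ell$ and $d - \ell$ lies in $S(f)$, i.e.\ $S_d(f) \cup T_d(f) = [0, d]$. Since Lebesgue measure is invariant under $\ell \mapsto d - \ell$, we have $\lambda(T_d(f)) = \lambda(S_d(f))$, so inclusion-exclusion gives $2\lambda(S_d(f)) \ge \lambda(S_d(f) \cup T_d(f)) = d$.

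For (ii), I would bootstrap (i) to the right. Since $S^c$ is open and $1 \in S(f)$ (because $f(0) = f(1) = 0$), the number $b := \inf\{x \in S(f) : x \ge d\}$ belongs to $(d, 1]$ and itself lies in $S(f)$ (which is closed). Because $(d, b) \subset S^c$, the sets $S_d(f)$ and $S_b(f)$ differ by at most the single point $b$, so $\lambda(S_d(f)) = \lambda(S_b(f))$. Applying (i) to $b \in S(f)$ then yields $\lambda(S_d(f)) = \lambda(S_b(f)) \ge b/2 > d/2$, which is the desired strict inequality.

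For (iii), (i) applied to $d = 1 \in S(f)$ gives $\lambda(S(f)) \ge 1/2$. For sharpness, I would take the piecewise-linear function on $[0,1]$ obtained by linearly interpolating $(0,0)$, $(1/4, 1)$, $(1/2, 0)$, $(3/4, -1)$, $(1, 0)$, i.e.\ a mountain of width $1/2$ followed by a valley of width $1/2$. Theorem \ref{mountainsarefull} gives $[0, 1/2] \subset S(f)$, while the argument in the proof of Theorem \ref{thm:1} shows that no $\ell \in (1/2, 1)$ can produce a horizontal chord (any such chord would join the mountain interior to the valley interior, giving function values of opposite signs). Combined with $1 \in S(f)$, this gives $S(f) = [0, 1/2] \cup \{1\}$, so $\lambda(S(f)) = 1/2$.

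The main conceptual step is recognizing that additivity of $S^c$ on the pairing $\{\ell, d - \ell\}$ is exactly what makes the symmetry argument run. The only mildly delicate point is the strict inequality in (ii): it is tempting to bootstrap via the left endpoint $a$ of the maximal open component of $S^c$ containing $d$, but that only yields $\lambda(S_d(f)) \ge a/2 < d/2$, too weak; using the right endpoint $b$ instead upgrades the bound past $d/2$, which is possible precisely because $1 \in S(f)$ keeps $b$ finite and $\le 1$.
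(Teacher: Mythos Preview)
Your proof is correct. Parts (i) and (ii) are essentially the paper's argument: the same reflection $\ell \mapsto d-\ell$ combined with additivity of $S^c$ to force $S_d(f)\cup T_d(f)=[0,d]$, and the same bootstrap to the next point of $S(f)$ on the right when $d\notin S(f)$.

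For sharpness your route is more elementary than the paper's. The paper writes down, for each $n$, an open additive set $S_n^*\subset[0,1]$ of total length $1/2$ and then invokes the \emph{converse} direction of Hopf's theorem (Hopf's explicit construction of a function $h_S$ realizing a prescribed chord complement) to conclude that a function with $\lambda(S(f))=1/2$ exists. You instead exhibit one concrete piecewise-linear function---a mountain of width $1/2$ followed by a valley of width $1/2$---and compute its chord set directly from Theorems~\ref{mountainsarefull} and~\ref{thm:1}, getting $S(f)=[0,1/2]\cup\{1\}$. This avoids any appeal to Hopf's construction and stays entirely within results already proved in the paper; what the paper's approach buys in exchange is an infinite family of sharp examples, one for each number $n$ of mountains and valleys.
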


\begin{proof} Notice that $1 \in S(f)$, so $S(f) \ne \emptyset$. Fix $d \in S(f)$.
Since the map $\ell \to d- \ell$ preserves the Lebsegue measure restricted to $[0,d]$ we have 
$$\lambda(T_d(f)) = \lambda(S_d(f)).$$

By Hopf's theorem if  $a + b \not \in S^*$ then we must have either $a$ or $b$ not in $S^c$, i.e., at least one of $a$ or $b$ belongs to $S(f)$.  We apply this to $d = \ell$ and $b = d - \ell$, since  $d \in S(f)$ we conclude that either $\ell$ or $d - \ell$ is contained in  $S(f)$, or equivalently either $\ell \in S_d(f)$ or $\ell \in T_d(f)$; i.e.,
$$S_d(f) \cup T_d(f) = [0,d].$$
If $\lambda(S_d(f)) < d/2$ the two display equations yield an immediate contradiction, thus the first statement of the theorem holds for any $d \in S(f)$.

Next suppose $d \not \in S(f)$, then since $S(f)$ is closed and $1 \in S(f)$ we can find $d_m \in S(f)$ such that $(d,d_m) \subset S^*$.  But then we have 
$S_d(f) = S_{d_m}(f) \setminus \{d_m\}$ and so
$\lambda(S_d(f)) = \lambda(S_{d_m}(f)) \ge d_m/2 > d/2$, which finishes the proof of the first and second statements.

To show that the bound is sharp, we construct an example of a function whose chord set has length $1/2$, with any desired number $n$ of mountains and valleys.

For each  $n\in\mathbf{N}$, consider the complementary sets
\begin{align*}
S_n &= \left[\frac 0n, \frac 1{n+1}\right], \left[\frac 1 n, \frac{2}{n+1}\right], \ldots, \left[\frac{n-1}n,\frac n{n+1}\right], \text{ and }\\
{S_n}^* &= \left(\frac{1}{n+1},\frac{1}{n}\right), \left(\frac{2}{n+1}, \frac{2}{n}\right), \ldots \left(\frac{n-1}{n+1}, \frac{n-1}{n}\right), \left(\frac{n}{n+1}, 1\right).
\end{align*}
It is clear that ${S_n}^*$ is an additive set, because all of the intervals are integer multiples of the first one. Thus since ${S_n}^*$ is an open additive set, by Hopf's theorem it is the complement of the chord set of some function on $[0,1]$. That chord set is $S_n$.

The total length of each $S_n$ is $1/2$:

\begin{align*}
\sum_{k=1}^n \left(\frac{k}{n+1}-\frac{k-1}{n} \right)
& = \sum_{k=1}^n \frac{k}{n+1} - \sum_{k=1}^n \frac{k-1}{n} \\
& = \frac 1{n+1}\frac{n(n+1)}{2} - \frac{1}{n}\frac{(n-1)n}{2} = \frac 12.
\end{align*}

\begin{figure}[ht]
     \centering
     \includegraphics[width=0.45\textwidth]{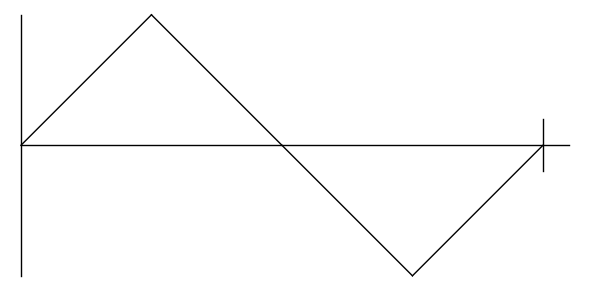} \quad
     \includegraphics[width=0.45\textwidth]{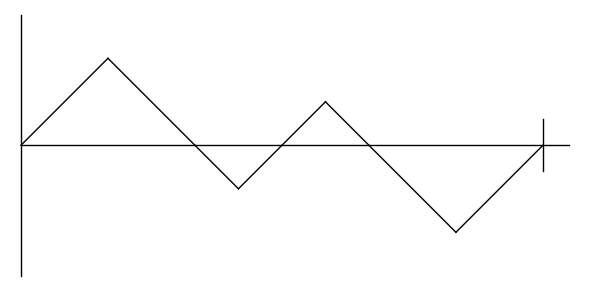} \\
     \includegraphics[width=0.45\textwidth]{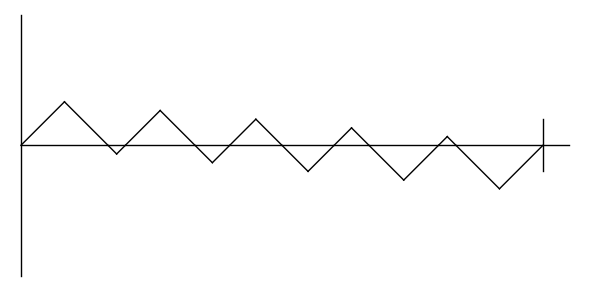} \quad
     \includegraphics[width=0.45\textwidth]{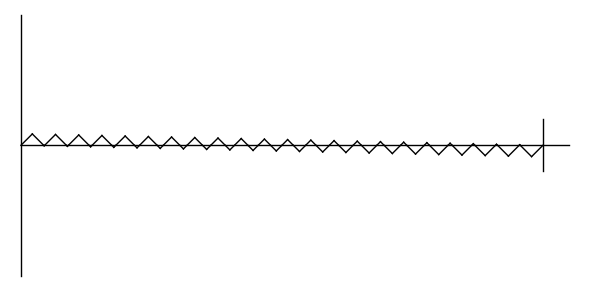}
     \caption{Functions having a horizontal chord set of length exactly $1/2$, with 1, 2, 5 and 22 mountains, respectively.}
     \label{fig:sharp-bound}
 \end{figure}

Given any open, additive set $S^*$, Hopf gives a construction for a function $h_S(x)$ whose 
horizontal chord set is exactly $S^*$. Examples of functions $h_{S_n}(x)$ with $n$ mountains and valleys, and chord set with length $1/2$, given by Hopf's construction are shown in Figure \ref{fig:sharp-bound}. Thus, for each $n$, this construction gives the desired function with $n$ bumps 
and horizontal chord set of length $1/2$.
\end{proof}

Our proof leads to the following natural question: does there exist a example of a function $f$
with countably many mountain ranges and countably many valley ranges such that $\lambda(S(f)) = 1/2$?

The following observation suggests that this is harder to achieve than one might imagine. 
Let $C$ be the middle thirds Cantor set, and let $f: [0,1] \to \mathbb{R}$ be any continuous function such that $f(x) = 0$ for all $x \in C$. The intersection of shifted Cantor sets is well studied; in particular, it is known that
\mbox{$\displaystyle  \{\ell \in [0,1]: (C - \ell) \cap C \ne \emptyset\} = [0,1]$} (see for example \cite{PP}). Since this set is just
\mbox{$\{\ell \in [0,1]: f(x - \ell) = f(x) = 0\}$}, it
is a subset of the horizontal chord set; thus  we conclude that $f$ has the full chord property, rather than the desired measure $1/2$.

\section{Acknowledgments} 
%[omitted]
We thank CIRM for an excellent working environment during our \guillemotleft~recherche en binomes~\guillemotright\  collaboration in July 2022. We used Desmos \cite{D} to experiment, explore all of the many cases, and create our figures. We thank the referees, whose suggestions improved our article.

\appendix
\section{Two mountain ranges separated by a valley range}

It would be nice to give a classification of which functions' chord sets have the full chord property, but the general case seems complicated. 
Thus here we will only analyze the simplest non-trivial case, the piecewise affine case with two mountains and one valley in between.

Let $w_\ell,w_v,w_r > 0$ be the widths of the left mountain, valley, and right mountain,  which are normalized to  add up to one, and let $h_\ell,h_r >0$  and $h_v < 0$ be their heights. 
Since the chord set does not depend on vertical scaling we normalize $\max(h_\ell,h_r) =1$.
We also use the widths of the ascent and descent of each mountain, which we call $a_\ell, d_\ell, a_r$ and $d_r$, respectively (see Figure \ref{fig:mountains-intersect}). 

\begin{figure}[!ht]
    \centering
    \includegraphics[width=0.5\textwidth] 
    {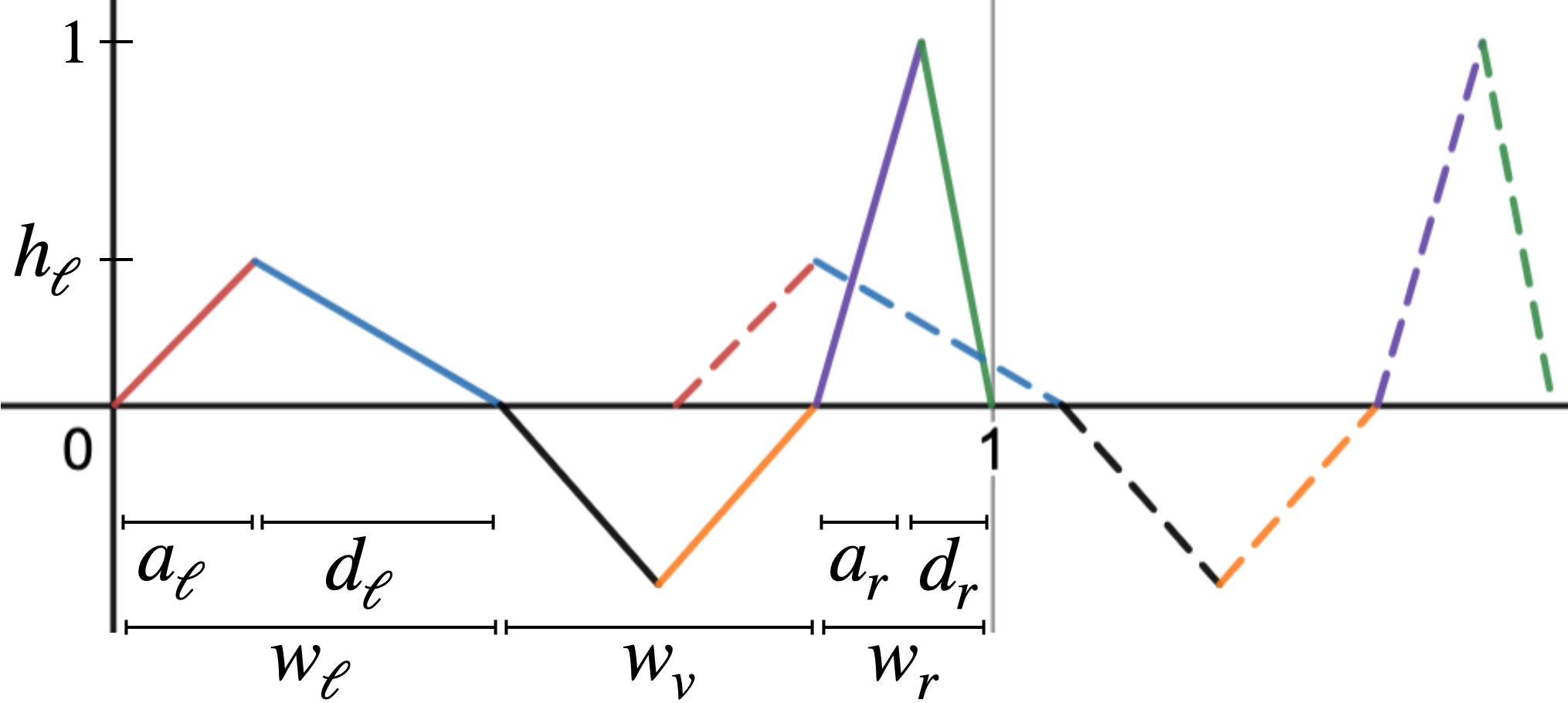}
    \caption{A narrow tall mountain always intersects a wide short mountain if their bases intersect. The image also shows the variables defined above.}
    \label{fig:mountains-intersect}
\end{figure}
 
 We will use the notation $f_{\w,\a,\d,\h}$. Certain  variables are redundant since a width is the sum of its ascent and descent, and one of the ascents or descents or widths is also redundant because of our normalization, thus we have a seven dimensional parameter space along with one discrete parameter.
However the constraints in the next theorem live in a 5-dimensional subspace of our 7-dimensional space because of the three parameters needed to describe a valley only its width  plays a role.

\begin{theorem}
The piecewise linear map $f_{\w,\d,\a,\h}$ does {\bf not} have the full chord property if and only if the parameters satisfy one of the following two conditions 

\begin{enumerate}
\item  (i) $w_\ell < w_r$, (ii) $h_\ell < 1$, (iii)
${h_\ell \cdot a_r}+ 1 - w_r -a_\ell < w_v + w_r $ \ and (iv)
$1 - h_\ell \cdot d_r-a_{\ell} > 
\max(w_r;w_\ell+w_v)$, or

\item (i) $w_r < w_\ell$, (ii) $h_r < 1$, (iii)   
${h_r \cdot a_\ell}+ 1 - w_\ell-a_r < w_v + w_\ell $ \ and (iv) 
$1 - h_r \cdot d_\ell-a_r >  \max(w_\ell;w_r+w_v)$.
\end{enumerate}
\end{theorem}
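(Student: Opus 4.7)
The plan is to prove both directions of the iff. The reflection $\tilde f(x)=f(1-x)$ preserves $S(f)$ and swaps case (1) with case (2), so in each direction I may WLOG work in case (1).

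Forward direction: Assume condition (1). I show that any $\ell^*$ in the non-empty open interval
\[
I=\bigl(\max\{w_r,\,w_\ell+w_v,\,h_\ell a_r+1-w_r-a_\ell\},\ \min\{1-w_\ell,\ 1-h_\ell d_r-a_\ell\}\bigr)
\]
satisfies $\ell^*\notin S(f)$; the three substantive max-versus-min inequalities making $I$ non-empty reduce respectively to (ii), (iii), (iv), with (i) handling the remaining trivial pairs. I verify $\ell^*\notin S(f)$ by showing the gap $g(x):=f(x)-f(x-\ell^*)$ is strictly positive on $[\ell^*,1]$. On $[\ell^*+w_\ell,1]$ the argument $x-\ell^*$ lies in the valley while $x$ lies in the right mountain, so $g>0$ trivially. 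On $[\ell^*,\ell^*+w_\ell]$ the shifted left mountain must sit strictly below the right mountain of $f$, which I reduce to the following triangle lemma: if $T_1$ is a triangular mountain of height $h<1$ with base $[\alpha,\beta]$ and peak at $\alpha+a$, and $T_2$ is a triangular mountain of height $1$ whose base contains $[\alpha,\beta]$, with $T_2(\alpha),T_2(\beta)>0$, then $T_2>T_1$ on $(\alpha,\beta)$ iff $T_2(\alpha+a)>h$. This holds because $T_2-T_1$ is piecewise linear with interior kinks only at the two peaks, and at $T_2$'s peak $T_2-T_1\geq 1-h>0$; hence the minimum of $T_2-T_1$ on $[\alpha,\beta]$ is at an endpoint or at $\alpha+a$. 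Conditions (iii) and (iv) say exactly that $\ell^*+a_\ell$ can be placed strictly inside $(1-w_r+h_\ell a_r,\ 1-h_\ell d_r)$, the region where $f>h_\ell$.

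Backward direction: Suppose $\ell\notin S(f)$. The nowhere-zero continuous $g$ has constant sign; via the reflection symmetry (which also flips the sign of $g$) I may assume $g>0$. The endpoint signs $f(\ell)>0,\ f(1-\ell)<0$ put $\ell$ in a mountain and $1-\ell$ in the valley. The subcase $\ell\in(0,w_\ell)$ forces $w_r<w_\ell$ (case (2)); the subcase $\ell\in(w_\ell+w_v,1)$ forces $w_\ell<w_r$ (case (1)), which I take WLOG; combining, $\ell\in(\max(w_r,w_\ell+w_v),1-w_\ell)$. Now the only-if direction of the triangle lemma, applied to the shifted left mountain over $[\ell,\ell+w_\ell]\subset(1-w_r,1)$ and the right mountain of $f$, forces $h_\ell<1$ (condition (ii)) and $\ell+a_\ell\in(1-w_r+h_\ell a_r,\ 1-h_\ell d_r)$; combining with the constraint on $\ell$ yields (iii) and (iv) by a short rearrangement.

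The main obstacle will be a clean proof of the triangle lemma, which requires a short case analysis on the relative order of the two peaks and on whether $T_2$'s peak falls in $(\alpha,\beta)$ at all. Everything else reduces to this base case via the reflection symmetry (which interchanges both the sign cases of $g$ and the two mountain subcases) and via the trivial subdivision of the overlap $[\ell^*,1]$ into the shifted-left-mountain piece and the shifted-valley piece.
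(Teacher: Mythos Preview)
Your approach is essentially correct and parallels the paper's: both reduce to deciding when the shifted smaller mountain fits strictly beneath the larger one, and both pivot on the same two heights $s_*=1-w_r+h_\ell a_r$ and $s^*=1-h_\ell d_r$ where the right mountain crosses level $h_\ell$. The paper organizes this as interval bookkeeping --- assembling $S(f)$ from pieces $S_{\rm init}=[0,w_{\max}]$, $S_{\rm mid}=[w_v,w_\ell+w_v]$, $S_{\rm fin}=[w_r+w_v,1]$, then analyzing the residual gap $T$ and the possible extra covering intervals $T_1,T_2$ --- whereas you go directly to the gap function $g(x)=f(x)-f(x-\ell)$ and your triangle lemma. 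Your route is a bit more streamlined, since the triangle lemma (minimum of a piecewise linear function is at a breakpoint) replaces several paragraphs of case-by-case interval comparison; the paper's route has the virtue of displaying the full structure of $S(f)$, not just the missing lengths.

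There is one genuine logical wrinkle in your backward direction. After using the reflection $\tilde f(x)=f(1-x)$ to force $g>0$, you cannot invoke reflection again to ``take WLOG'' case~(1): reflection would send $g>0$ back to $g<0$. The repair is easier than another symmetry: the subcase $\ell\in(0,w_\ell)$ is actually vacuous. By Theorem~\ref{mountainsarefull} every mountain or valley range of width $w$ contributes $[0,w]\subset S(f)$, so $\ell\notin S(f)$ already forces $\ell>w_{\max}\ge w_\ell$. Hence once $g>0$ you are automatically in the subcase $\ell\in(w_\ell+w_v,1)$, which yields all of (i)--(iv) of case~(1) as you outline; reflection then converts the $g<0$ situation into case~(2). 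With that one-line fix your argument is complete.
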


\begin{proof}
There are two different ways to get a chord value.
The first possibility is that  either the chord connects the two sides of the same mountain or the two side of  the same valley, for such chord lengths Theorem \ref{mountainsarefull} implies $S_{\rm init} := [0,w_{\rm max}] \subset S(f)$ where $w_{\rm max} := \max(w_\ell,w_v,w_r)$.

Because of the special form of our graph there is only one other type of chord length possible, namely the lengths of chords that arise between the two mountains, or equivalently when the left mountain  shifted by such a length intersects the right mountain. 

Clearly if the narrower mountain is at least as high 
as the wider mountain, then the mountains always intersect when their bases overlap (Figure \ref{fig:mountains-intersect}), so such a function has the full chord property. Thus in order not to have the full chord property, one mountain must be narrower and shorter. We can restrict to two cases:

\noindent {\bf  Case (1)} (Figure \ref{fig:wide-intersects}) Suppose first that the narrower and shorter mountain is on the left
((i) $w_\ell < w_r$ and (ii) $h_\ell < h_r = 1$). 

\begin{figure}[!ht]
    \centering
\includegraphics[width=0.8\textwidth]{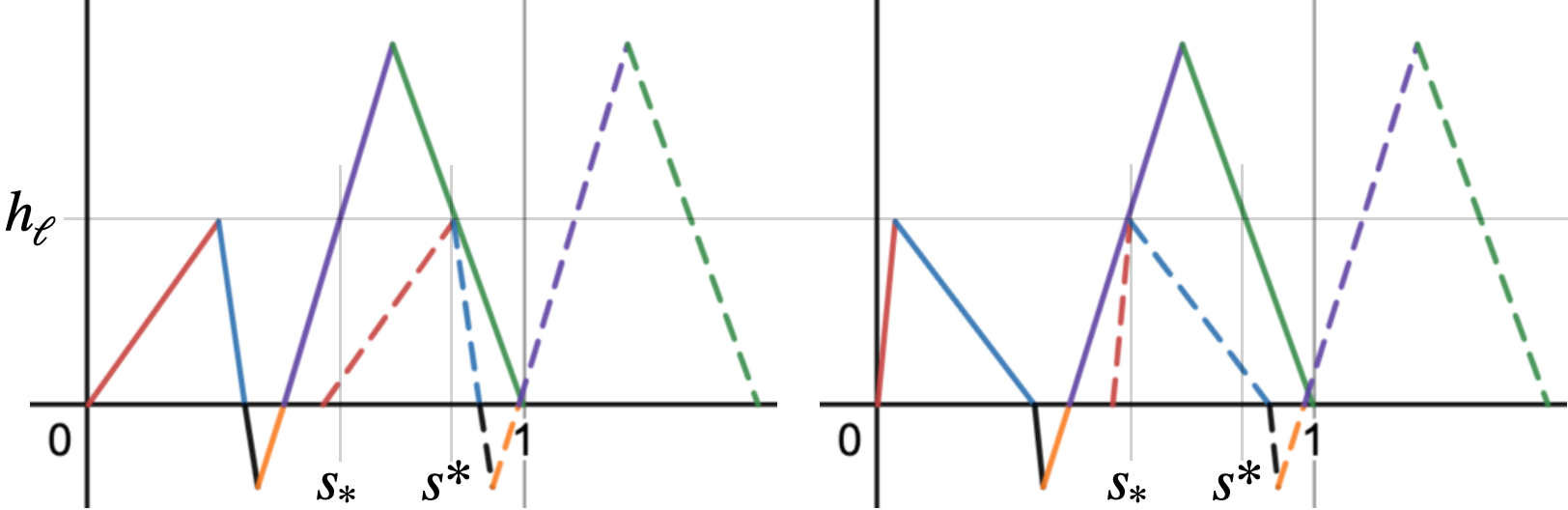}
    \caption{The case when a narrower \it{and} shorter mountain is on the left, along with the definitions of $s_*$ and $s^*$.}
    \label{fig:wide-intersects}
\end{figure}

No matter which parameters we choose, chord lengths between the two mountains  arise if the bases of the  shifted left mountain and the right mountain intersect,  but neither base is contained in the other base. 
This set consists of two intervals described below.

 When a parameter value belongs to the interval \mbox{$S_{\rm mid} := [w_v, w_\ell + w_v]$}, the shift of the left mountain  intersects
the ascent of the right mountain, 
and so $S_{\rm mid} \subset S(f)$ (Figure \ref{fig:S_mid}).

When a parameter value belongs to the interval  \mbox{$S_{\rm fin} := [w_r + w_v,1]$}, the shift of the left mountain  intersects
the descent of the right mountain, so $S_{\rm fin} \subset S(f)$.

\begin{figure}[ht]
    \centering
    \includegraphics[width=\textwidth]{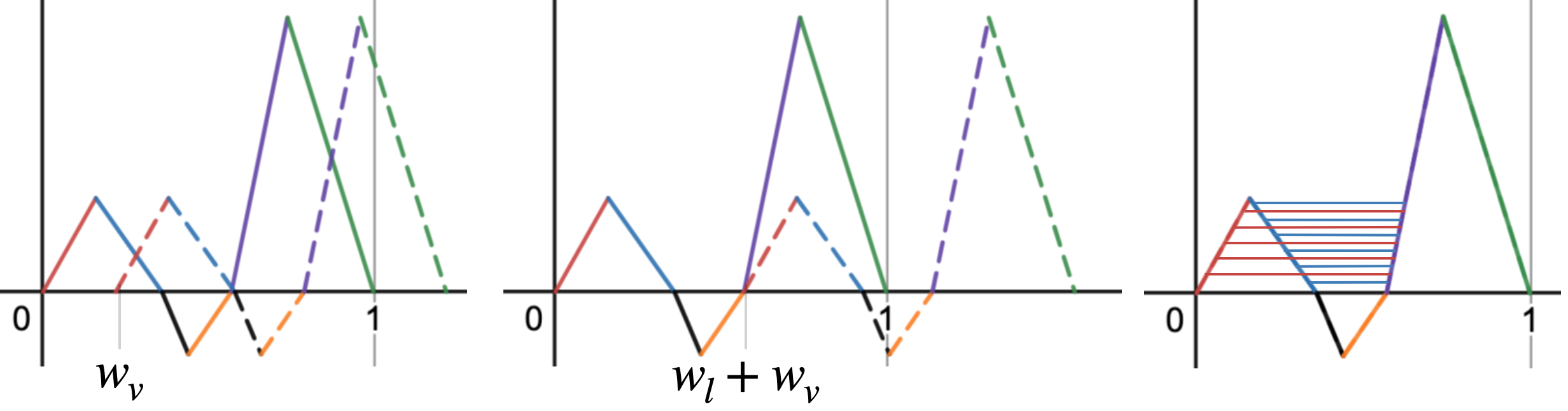}
    \caption{(Left) a shift by the parameter $w_v$ (middle) a shift by the parameter $w_l+w_v$ (right) the horizontal chord lengths in $S_{\rm mid}$}
    \label{fig:S_mid}
\end{figure}

Summarizing, we have shown that $ S_{\rm init} \cup S_{\rm mid} \cup S_{\rm fin} \subset S(f)$ no matter which  parameter values we choose, as long as the narrower shorter mountain is on the left.  

We will now analyze the complementary set $T := (S_{\rm init} \cup S_{\rm mid} \cup S_{\rm fin})^c$. 
The intervals $S_{\rm init}$ and $S_{\rm mid}$ abut or overlap since $w_v \le w_{\rm max}$.
The larger of the two right endpoints of these two intervals is
$$ \max(w_{\rm max},w_\ell + w_v) = \max(w_\ell,w_v,w_r, w_\ell + w_v) = 
\max(w_r, w_\ell + w_v).$$
Thus we have
\begin{equation} \label{T}
T=(\max(w_r;w_\ell+w_v), w_v + w_r).
\end{equation}
Note that the set $T$ is contained in the set of parameters for which the base of the shifted left mountain is inside the base of the right mountain, and is a strict subset if $S_\text{mid} \subset S_{\text{init}} \setminus [0,w_v)$, or equivalently if $w_r > w_\ell + w_v$.

If the ascent of the smaller mountain is steeper than the ascent of the larger mountain, or if the descent of the smaller mountain is steeper than the descent of the larger mountain, then we get an additional interval of chord values in $S(f)$ that are in $T$, one for each such case.

There are three solutions to the equation $f(s) = h_{\ell}$, $a_\ell < s_* < s^*$ 
(pictured in Figure \ref{fig:wide-intersects}).  
The zero, one or two additional intervals of chord values in $T$ are then 
$T_1 :=[\max(w_r;w_\ell+w_v);s_*-a_\ell]$
and
$T_2 :=[s^*-a_{\ell};w_v + w_\ell ]$,
where here an interval $[a,b]$ with $b<a$ is interpreted as the empty interval.

To calculate $s_*$ and $s^*$ consider the equations of the ascent and descent of the right mountain, recalling that $h_r=1$:
  \begin{equation*}
  y = \frac{1}{a_r}(s - (1-w_r)) \quad \hbox{ and } \quad
 y =  \frac{-1}{d_r}(s - 1).
\end{equation*}
Replacing $y$ by $h_{\ell}$ in these two equations yields 
\begin{equation*}
s_* = {h_\ell \cdot a_r}+ 1 - w_r \hbox{  and  } s^* =
1 - h_\ell \cdot d_r.
\end{equation*}
Thus the two additional intervals are
\begin{eqnarray}\label{T1}
T_1 &:=& [\max(w_r;w_\ell+w_v);{h_\ell \cdot a_r}+ 1 - w_r -a_\ell] \text{  and } \\
T_2 &:=&[1 - h_\ell \cdot d_r-a_{\ell};w_v + w_\ell ]. \label{T2}
\end{eqnarray}

The intervals $T_1$ and $T_2$ never overlap since $s_* < s^*$, 
but either one of them can cover $T$. Thus,  the full chord property in the case under consideration
is equivalent to having either $T \subset T_1$ or
$T \subset T_2$.

Comparing equations \eqref{T} and \eqref{T1}--\eqref{T2}, we see that $T_1$ covers $T$ if and only if
$${h_\ell \cdot a_r}+ 1 - w_r -a_\ell \ge w_v + w_r $$
and the right end point of $T_2$ is larger than the right endpoint of $T$, thus $T_2$ covers $T$ if and only if
$$1 - h_\ell \cdot d_r-a_{\ell} \le 
\max(w_r;w_\ell+w_v).$$
If neither of these last two equations, which correspond to the negations of (iii) and (iv), hold, then we do not have the full chord property.

\noindent {\bf Case (2)} (Figure  \ref{fig:right-smaller}) Now suppose that the narrower and shorter mountain is on the right 
((i) $w_r < w_\ell$ and (ii) $h_r < h_\ell = 1$).  

\begin{figure}[ht]
    \centering
    \includegraphics[width=0.6\textwidth]{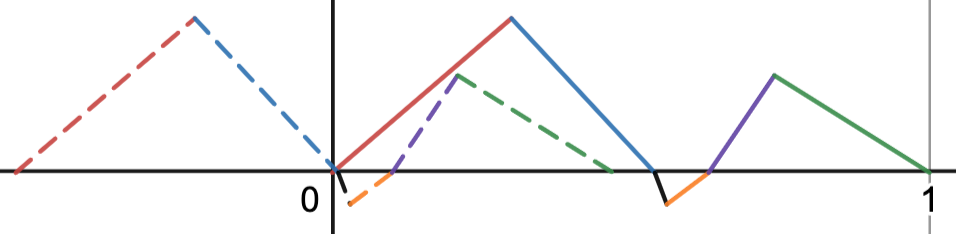}
    \caption{The case when the right mountain is narrower and shorter}
    \label{fig:right-smaller}
\end{figure}

We take the walk backwards, in other words, we consider 
the function \mbox{$g(s) :=  f\circ i (s)$} where 
$i$ is the isometry   \mbox{$i: s \mapsto 1-s$}.
The function $g$
has the same chord set as $f$, and furthermore if $f$ is in case (2) then $g$ is  in case (1). 
Applying the characterization proven in case (1) to $g$
and reinterpreting this characterization in terms of the parameters of  $f$ yields:
(i) $w_r < w_\ell$, (ii) $h_r < 1$, (iii) 
${h_r \cdot a_\ell}+ 1 - w_\ell-a_r < w_v + w_\ell $, and
(iv) $1 - h_r \cdot d_\ell-a_r > 
\max(w_\ell;w_r+w_v).$
\end{proof}

A Monte Carlo simulation suggests that the percentage of such functions with the full chord property is approximately $70.4\%$. In principle an exact calculation is possible, but as the constraints are not linear, it is not elementary.

\begin{thebibliography}{99}
\bibitem{BDD} Keith Burns, Orit Davidovich, Diana Davis, {\it Average pace and horizontal chords}, The Mathematical Intelligencer, 39(4) (2017) 41-45.

\bibitem{D} {\it Desmos}, \url{https://www.desmos.com/calculator/}, accessed 2022--2023.

\bibitem{GPY} Jacob E.\  Goodman, Janos Pach and Chee K.\  Yap {\it Mountain Climbing, Ladder Moving, and the Ring-Width of a Polygon},  
The American Mathematical Monthly,  96 (6) (1989) 494--510.

\bibitem{H} Heinz Hopf, {\it \"Uber die Sehnen ebener Kontinuen und 
die Schleifen geschlossener Wege}, Comment.~Math.~Helv., 9 (1936) 303--319.

\bibitem{K} Tam\'as Keleti,
{\it The mountain climbers' problem }, Proc.~AMS, 117(1) (1993) 89--97.


\bibitem{L} Paul L\'evy, {\it Sur une G\'en\'eralisation du Th\'eor\'eme de Rolle}, C.R.~Acad.~Sci., Paris, 198 (1934) 424--425.

\bibitem{PP}
Steen Pedersen,  Jason D.~Phillips, {\it On intersections of Cantor sets: Hausdorff measure}, Opuscula Math. 33(3) (2013)  575--598.

\bibitem{W}
 James Whittaker, {\it A mountain-climbing problem}, Canad. J.\ Math 18 (1966) 873--882.

\end{thebibliography}
\end{document}